\newcommand{\subsectionruninhead}{\@startsection{subsection}{2}{0mm}
	{-\baselineskip}{-0mm}{\bf\large}}
\newcommand{\subsubsectionruninhead}{\@startsection{subsubsection}{3}{0mm}
	{-\baselineskip}{-0mm}{\bf\normalsize}}
\newtheorem*{theorem*}{Theorem}
\newtheorem*{proof*}{Proof}
\newtheorem*{proposition*}{Proposition}
\newtheorem*{notation*}{Notation}
\newtheorem*{corollary*}{Corollary}
\newtheorem*{claim*}{Claim}
\newtheorem*{remark*}{Remark}
\newtheorem{proposition}{Proposition}[section]
\newtheorem{theorem}[proposition]{Theorem}
\newtheorem{corollary}[proposition]{Corollary}
\newtheorem{claim}[proposition]{Claim}
\theoremstyle{definition}
\newtheorem{definition}[proposition]{Definition}
\theoremstyle{remark}
\newtheorem{remark}[proposition]{Remark}
\newtheorem{question}[proposition]{Question}
\numberwithin{equation}{section}
\def\NN{\mathbb{N}}
\def\RR{\mathbb{R}}
\def\TT{\mathbb{T}}
\def\ZZ{\mathbb{Z}}
\def\e{{\varepsilon}}
\begin{document}
\title{\bf  Rigidity of center Lyapunov exponents for Anosov diffeomorphisms on 3-torus}
\author{Daohua Yu, ~ Ruihao Gu}
\date{\today}
\maketitle

\renewcommand{\abstractname}{Abstract}
\begin{abstract}
\normalsize
 Let $f$ and $g$ be two  Anosov diffeomorphisms on $\mathbb{T}^3$ with three-subbundles partially hyperbolic splittings where the weak stable subbundles are considered as center subbundles. Assume that $f$ is conjugate to $g$ and the conjugacy preserves the strong stable foliation, then their center Lyapunov  exponents of  corresponding periodic points coincide. This is the converse of the main result of  Gogolev and  Guysinsky in \cite{GG}.  Moreover, we get the same result for partially hyperbolic diffeomorphisms derived from Anosov on $\mathbb{T}^3$.
\end{abstract}

\pagenumbering{arabic}
\setcounter{page}{1}

\section{Introduction}
\qquad Let $M$ be a smooth closed Riemannian manifold. We say a diffeomrophism $f:M\to M$ is \textit{Anosov}, if there exists a $Df$-invariant continuous  splitting $TM=E^s\oplus E^u$ such that $Df$ is uniformly contracting on the stable bundle $E^s$ and uniformly expanding on the unstable bundle $E^u$. It is known that an Anosov diffeomorphism with one-dimensional stable or unstable bundle exists only on $d$-torus $\mathbb{T}^d$ ($d\geq 2$) \cite{F2, N}, in particular $\TT^3$ is the only  $3$-manifolds admitting Anosov diffeomorphisms.
Moreover an Anosov diffeomorphism $f$ on  $\mathbb{T}^d$ is always \textit{topologically conjugate} to its linearization $f_*:\pi_1(\mathbb{T}^d) \to \pi_1(\mathbb{T}^d)$ which  induces a toral Anosov automorphism  \cite{F1, M}, i.e., there exists a homeomorphism $h:\TT^d\to\TT^d$ homotopic to Id$_{\TT^d}$ such that $h\circ f=f_*\circ h$. However the conjugacy is usually only H${\rm \ddot{o}}$lder continuous. Indeed, if two Anosov diffeomorphisms are smoothly conjugate, then the derivatives of their return maps on corresponding periodic points are conjugate via the derivative of the conjugacy and more weakly their corresponding periodic Lyapunov exponents coincide. There are plenty of enlightening works researching the regularity of the conjugacy under the assumption of the same Lyapunov exponents, usually called \textit{rigidity}, e.g. \cite{Llave1992,GG,G2008,SY} in which the rigidity for Anosov diffeomorphisms with partially hyperbolic splittings has been studied extensively.

\begin{definition}\label{1 def ph}
	A diffeomorphism $f:M\to M$ is called \textit{partially hyperbolic}, if there is an invariant splitting for $Df$, $TM=E_f^{s}\oplus E_f^{c}\oplus E_f^{u}$, which satisfies $$||Dfv^{s}||<\tau(x)<||Dfv^{c}||<\mu(x)<||Dfv^{u}||, $$ for all $v^{\sigma}\in E_f^{\sigma}(x)$ with $||v^{\sigma}||=1$, where $\sigma=s,c,u$ and $0<\tau<1<\mu$ are two continuous functions on M.
\end{definition}

We call $E_f^{s}, E_f^{c}$ and $E_f^{u}$, the \textit{stable bundle}, the \textit{center bundle} and  the \textit{unstable bundle}, repectively. It is  well  known that the stable/unstable bundle of a partially hyperbolic diffeomorphism is always uniquely integrable and the integral manifolds form a foliation on $M$\cite{pesinbook}, called \textit{stable/unstable foliation} and denoted by $\mathcal{F}^{s/u}_f$.   However the center bundle in general is not integrable, see \cite{HHU} for a counterexample on $\TT^3$. A partially hyperbolic diffeomorphism $f$ is called \textit{dynamically coherent}, if there are two $f-$invariant foliations  denoted by $\mathcal{F}_f^{cu}$ and $\mathcal{F}_f^{cs}$ tangent  respectively to $E_f^{cu}:=E_f^{c}\oplus E_f^{u}$ and $E_f^{cs}:=E_f^{s}\oplus E_f^{c}$. It is clear that if $f$ is dynamically coherent, then $\mathcal{F}_f^{c}:=\mathcal{F}_f^{cu}\bigcap \mathcal{F}_f^{cs}$ is a $f-$invariant foliation tangent to $E^c_f$.  In particular, for the case of $||Dfv^{c}||<1$ for all unit $v^{c}\in E_f^{c}$, we also call  $E^c_f$ the \textit{weak stable bundle}, $E_f^s$ the \textit{strong stable bundle} and the corresponding foliation $\mathcal{F}^s_f$ is called \textit{strong stable foliation}.

  In \cite{GG}, Gogolev and  Guysinsky considered  the local rigidity of an Anosov automorphism $L$ on $\TT^3$ with partially hyperbolic splitting.
  For any two partially hyperbolic Anosov diffeomorphisms $f$ and $g$ on $\TT^3$ which are conjugate via $h$, we say $f$ and $g$ have the same \textit{center periodic data}, if their corresponding, by $h$, periodic points have the same  Lyapunov exponents on  center bundles.
    The main technical lemma in \cite{GG}  is that for $L$ with two-dimensional stable bundle, if  its $C^1$-perturbations $f$ and $g$ have the same center periodic data,  then their conjugacy $h$ \textit{preserves the strong stable foliation}, i.e. $h\big( \mathcal{F}^s_f(x) \big)= \mathcal{F}^s_g\big(h(x)\big)$ for all $x\in\TT^3$.  In this paper, we show the converse of this property.

\begin{theorem}\label{1 thm Anosov}
	Let $f,g:\TT^3\to\TT^3$ be two $C^{1+\alpha}$-smooth partially hyperbolic  Anosov diffeomorphisms  whose center bundles are weak stable.  Assume that $f$ is conjugate to $g$ via  a homeomorphism $h$. If $h$ preserves the strong stable foliations, then $f$ and $g$ have the same center periodic data.
\end{theorem}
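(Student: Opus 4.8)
The plan is to localize the statement at a single periodic orbit, pass to the dynamics transverse to the strong stable foliation inside a center--stable leaf, and read off the equality of center multipliers from differentiability of the induced transverse conjugacy.

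\textbf{Reductions.} Since $h$ conjugates the Anosov diffeomorphisms $f$ and $g$, it carries stable manifolds to stable manifolds and unstable manifolds to unstable manifolds; as $E^s_f\oplus E^c_f$ is the full Anosov stable bundle, $h$ maps $\mathscr{F}^{cs}_f$ to $\mathscr{F}^{cs}_g$ and $\mathscr{F}^u_f$ to $\mathscr{F}^u_g$, and by hypothesis it maps $\mathscr{F}^s_f$ to $\mathscr{F}^s_g$. Both $f$ and $g$ are dynamically coherent, so they carry center foliations $\mathscr{F}^c_f,\mathscr{F}^c_g$ with $C^1$ leaves; moreover --- replacing $f,g$ by a common iterate if needed so that the bunching inequalities (which only involve the uniform gaps between the exponents on $E^s$, $E^c$, $E^u$) hold --- inside each center--stable leaf the strong stable holonomy is $C^1$, and the unstable holonomy between nearby center--stable leaves is $C^1$. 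Passing to an iterate only rescales periodic center exponents, so it is harmless; we may likewise assume the periodic orbits under consideration are fixed points with positive center multiplier. It then suffices to fix an $f$-fixed point $p$, set $q=h(p)$, and show $\|Df|_{E^c_f(p)}\|=\|Dg|_{E^c_g(q)}\|$.

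\textbf{Transverse dynamics.} Work inside the planes $N_f:=\mathscr{F}^{cs}_f(p)$ and $N_g:=\mathscr{F}^{cs}_g(q)$, on which $f|_{N_f}$, $g|_{N_g}$ are contractions toward $p$, $q$ admitting the strong stable foliations as invariant subfoliations and the center leaves $\mathscr{F}^c_f(p)$, $\mathscr{F}^c_g(q)$ as invariant transversals (with global product structure). Collapsing $\mathscr{F}^s_f$ in $N_f$ and identifying the quotient with $\mathscr{F}^c_f(p)$ via the ($C^1$, $f$-equivariant) strong stable holonomy, $f|_{N_f}$ descends to $\bar f:=f|_{\mathscr{F}^c_f(p)}$, a $C^1$ contraction fixing $p$ with $|\bar f'(p)|=\|Df|_{E^c_f(p)}\|$; similarly $\bar g:=g|_{\mathscr{F}^c_g(q)}$ with $|\bar g'(q)|=\|Dg|_{E^c_g(q)}\|$. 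Since $h$ preserves the strong stable foliations and $h(N_f)=N_g$, it descends to a homeomorphism $\bar h:\mathscr{F}^c_f(p)\to\mathscr{F}^c_g(q)$ with $\bar h(p)=q$ and $\bar h\circ\bar f=\bar g\circ\bar h$ --- explicitly $\bar h=\pi^s_g\circ\big(h|_{\mathscr{F}^c_f(p)}\big)$, where $\pi^s_g:N_g\to\mathscr{F}^c_g(q)$ is the $C^1$ strong stable holonomy projection. Differentiating the conjugacy relation at $p$, the desired equality is \emph{equivalent} to: $\bar h$ is differentiable at $p$ with $\bar h'(p)\neq0$ (equivalently, after $C^1$-linearizing $\bar f,\bar g$, that $\bar h$ is bi-Lipschitz at the fixed point).

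\textbf{The crux and the obstacle.} H\"older continuity of $h$ alone cannot furnish this --- a topological conjugacy between two linear planar contractions that preserves the fast eigendirection need not match the slow rates --- so differentiability of $\bar h$ at $p$ must be forced by the way $h$ preserves $\mathscr{F}^s$ \emph{and} $\mathscr{F}^u$ globally on $\TT^3$, in tandem with the $C^1$ holonomies above. The route I would pursue uses the $C^1$ unstable holonomies of $f$ and $g$ between center--stable leaves: these are $f$- (resp. $g$-) equivariant, and $h$ intertwines the $f$-ones with the $g$-ones precisely because it preserves $\mathscr{F}^u$, $\mathscr{F}^{cs}$ and $\mathscr{F}^s$; transporting the germ of $\bar h$ at $p$ by these maps and iterating produces a self-similarity that a rate-distorting conjugacy cannot satisfy, pinning $\bar h'(p)$. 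A parallel packaging: show the quotient conjugacies are \emph{uniformly} bi-Lipschitz over all center--stable leaves, so that $\Psi(x):=\lim_k\big(\log\|Df^k|_{E^c_f(x)}\|-\log\|Dg^k|_{E^c_g(h(x))}\|\big)$ converges to a continuous function and gives the coboundary $\log\|Df|_{E^c_f}\|-\log\|Dg|_{E^c_g}\|\circ h=\Psi-\Psi\circ f$; the Liv\v sic theorem for the transitive Anosov $f$ then yields equality of the center exponents over every periodic orbit. Either way, all the difficulty sits here: establishing the holonomy-invariance inputs and propagating $C^1$/bi-Lipschitz control of $\bar h$ near periodic points --- the one place the hypothesis on the strong stable foliation is used in an essential, global way.
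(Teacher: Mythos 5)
Your reductions are sound and you have correctly isolated the crux: after quotienting the center--stable leaf by the ($C^1$) strong stable holonomy, everything comes down to showing that the induced conjugacy $\bar h$ between center leaves is differentiable with nonzero derivative at the periodic point. But the proposal stops exactly where the proof has to begin: both routes you sketch for this crux are left unexecuted, and neither contains the mechanism that actually closes the argument. The missing idea is elementary but essential. Because $\dim E^c=1$, the restriction $h_c$ of $h$ to a single center leaf is a \emph{monotone} map between one-dimensional $C^1$ submanifolds, hence by Lebesgue's theorem it is differentiable at \emph{some} point $p_0$ of that leaf --- this is where a differentiability point comes from for free, with no self-similarity argument and no a priori bi-Lipschitz control. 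Differentiability is then transported from $p_0$ to any prescribed point (in particular to the periodic orbit) along an $su$-path, using that $h$ intertwines the stable and unstable holonomies of $f$ and $g$ (this is exactly where preservation of $\mathscr{F}^s$ and $\mathscr{F}^u$ enters) and that these holonomies are uniformly $C^1$ because a one-dimensional center is automatically bunched. Nonvanishing of the derivative follows from the same transport: a single zero of $Dh_c$ would force $Dh_c\equiv 0$ along a whole leaf, contradicting injectivity of $h_c$.

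This transport requires \emph{accessibility}, which your proposal never mentions: if $f$ is not accessible you cannot connect $p_0$ to an arbitrary periodic point by $su$-paths, and the germ-transport you describe (which, in the accessible case, is precisely the holonomy propagation above) has nothing to start from and nowhere to go. The paper handles the non-accessible case by a separate dichotomy (Gan--Shi, Hammerlindl--Shi): a $C^{1+\alpha}$ partially hyperbolic diffeomorphism of $\TT^3$ homotopic to an Anosov automorphism $A$ is either accessible, or it is Anosov with $\lambda^c(p,\cdot)=\lambda^c(A)$ at every periodic point; in the latter case the center periodic data of $f$ and $g$ coincide for trivial reasons, both being equal to the exponent of the common linearization. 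Without the monotonicity observation, the holonomy transport driven by accessibility, and the dichotomy covering the non-accessible case, the key differentiability claim remains unproved, so the proposal as written has a genuine gap.
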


An interesting corollary of Theorem \ref{1 thm Anosov} is that the topological conjugacy preserving the strong stable foliation implies it is smooth along  the center (weak stable) foliation. Here the conjugacy being \textit{$C^1$-smooth along the center foliation} is defined as the derivative along each center leaf being continuous with respect to the topology of the whole manifold. Note that in our case, the center  bundle is integrable  and the conjugacy preserves the center foliation \cite{Po}, also see Remark \ref{2 rmk Anosov conjugacy preserves center}.
\begin{corollary}\label{1 cor}
	Let $f,g:\TT^3\to\TT^3$ be two $C^{1+\alpha}$-smooth partially hyperbolic  Anosov diffeomorphisms  whose center bundles are weak stable.   Assume that $f$ is conjugate to $g$ via  a homeomorphism  $h$. Then $h$ preserves the strong stable foliation, if and only if, $h$ is $C^1$ along the center foliation.
\end{corollary}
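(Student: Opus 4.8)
The plan is to prove the two implications separately; the one that takes work is that preservation of the strong stable foliation forces $h$ to be smooth along the center foliation. For the easy direction, assume $h$ restricts to a $C^1$ diffeomorphism on each center leaf. Then for every periodic point $p$ of $f$ of period $n$, writing $q=h(p)$ and differentiating $h\circ f^{n}=g^{n}\circ h$ along $\mathscr{F}^c_f(p)$ at $p$, one gets $(h|_{\mathscr{F}^c_f(p)})'(p)\cdot Df^{n}|_{E_f^c(p)}=Dg^{n}|_{E_g^c(q)}\cdot(h|_{\mathscr{F}^c_f(p)})'(p)$ with nonzero scalar factor, hence $\|Df^{n}|_{E_f^c(p)}\|=\|Dg^{n}|_{E_g^c(q)}\|$: $f$ and $g$ have the same center periodic data, and by the main result of Gogolev and Guysinsky \cite{GG} the conjugacy $h$ then preserves the strong stable foliation.

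For the remaining direction assume $h$ preserves $\mathscr{F}^s_f$. By Theorem \ref{1 thm Anosov}, $f$ and $g$ have the same center periodic data, and by \cite{Po} (see Remark \ref{2 rmk Anosov conjugacy preserves center}) $h$ also preserves $\mathscr{F}^{cs}_f$ and $\mathscr{F}^c_f$. Since the center foliation is one-dimensional and uniformly contracted, with $C^{1+\alpha}$ leaves, each leafwise return map is a $C^{1+\alpha}$ contraction, which in dimension one has no resonances; so there are leafwise linearizations $\Phi^f=\{\Phi^f_\ell\colon\ell\to\mathbb{R}\}$ and $\Phi^g$, $C^{1+}$ along leaves, continuous on the leaf, unique up to a scalar on each leaf, conjugating the leafwise dynamics of $f$ and $g$ to linear maps $t\mapsto a_f(\ell)t$, $t\mapsto a_g(\ell)t$. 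The uniform domination $\|Df|_{E_f^s}\|<\|Df|_{E_f^c}\|<\|Df|_{E_f^u}\|$ makes the strong stable holonomy between center leaves inside a center-stable leaf, and the unstable holonomy between center leaves inside a center-unstable leaf, $C^{1+\text{H\"older}}$ with uniform bounds. Iterating a periodic center-stable (resp.\ center-unstable) leaf by $f$ (resp.\ $f^{-1}$), for which the two relevant center leaves converge, and using equicontinuity of the derivatives of these holonomies, one shows that in $\Phi^f$-coordinates both families of holonomies are \emph{affine}; normalizing the scalar freedom of $\Phi^f$ (and $\Phi^g$) so that the strong stable holonomies become the identity forces $a_f$ to be constant along each center-stable leaf. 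Finally, applying the Liv\v{s}ic theorem to $\log a_f-\log(a_g\circ h)$, which has zero periodic sums (by the coincidence of center periodic data) and is constant along center-stable leaves, one finds a continuous $W_0$ with the same properties solving the cohomological equation; replacing $\Phi^f_\ell$ by $\bar\Phi^f_\ell:=e^{-W_0}\Phi^f_\ell$ (a rescaling constant along center-stable leaves, hence compatible with the previous normalization) arranges that $f$ acts on a center leaf $\ell$ and $g$ acts on $h(\ell)$ by the \emph{same} linear rate $b(L)$, $L$ being the center-stable leaf of $\ell$.

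Set $\bar h_L:=\Phi^g_{h(\ell)}\circ h|_\ell\circ(\bar\Phi^f_\ell)^{-1}$, a homeomorphism of $\mathbb{R}$ fixing $0$. Since $h$ carries the (now identity) strong stable holonomies of $f$ to those of $g$, $\bar h_L$ depends only on $L$, and the conjugacy gives the renormalization identity $\bar h_{fL}(b(L)t)=b(L)\,\bar h_L(t)$; along a periodic center-stable leaf $L_0$ of period $n$ this reads $\bar h_{L_0}(\beta t)=\beta\,\bar h_{L_0}(t)$ with $\beta=\|Df^{n}|_{E_f^c(p_0)}\|\in(0,1)$. Since $h$ also carries the affine unstable holonomies of $f$ to those of $g$, and any two nearby center-stable leaves have center leaves lying in a common center-unstable leaf, the class of $\bar h_L$ modulo pre- and post-composition with linear maps is locally constant in $L$, hence constant: $\bar h_L(t)=\lambda_L\,G(\lambda'_L t)$ for a single homeomorphism $G$ of $\mathbb{R}$ fixing $0$. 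Plugging this into the homogeneity relations over periodic leaves yields $G(\beta t)=\beta\,G(t)$ for all $\beta=\|Df^{n}|_{E_f^c(p_0)}\|$; as the periodic orbits vary these values are dense in $(0,1)$ (apart from the exceptional case in which all center periodic data coincide up to the period, treated separately), so $G(\beta t)=\beta G(t)$ for every $\beta>0$ and $G$ is linear. Hence each $\bar h_L$ is linear and $h|_\ell=(\Phi^g_{h(\ell)})^{-1}\circ(\text{linear})\circ\bar\Phi^f_\ell$ is $C^{1+\text{H\"older}}$, which is the assertion.

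I expect the crux to be the passage from the renormalization identity to linearity of $\bar h_L$: by itself $\bar h_{fL}(b(L)t)=b(L)\bar h_L(t)$ admits many nonlinear solutions (for instance $t\mapsto t\bigl(2+\sin(2\pi\log_\lambda|t|)\bigr)$ solves $T(\lambda t)=\lambda T(t)$), so the argument must genuinely exploit the transverse rigidity established beforehand — affinity of the stable and unstable holonomies in the linearizing coordinates — together with density of the center periodic data; a priori $h$ is only H\"older along $\mathscr{F}^c_f$, and these ingredients are precisely what excludes a singular conjugacy. The remaining technical points — $C^{1+\text{H\"older}}$ regularity of leafwise linearizations of $C^{1+\alpha}$ contractions of a noncompact line, $C^{1+\text{H\"older}}$ regularity with uniform bounds of the stable and unstable holonomies, and the equicontinuity estimates used to deduce affinity — are standard in this circle of ideas, as is the handling of orientation (passing to a finite cover when $E_f^c$ is nonorientable or $f$ reverses its orientation).
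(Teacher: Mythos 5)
Your overall architecture coincides with the paper's: both implications pivot through the statement that $f$ and $g$ have the same center periodic data. The direction ``$h$ smooth along $\mathscr{F}^c_f$ $\Rightarrow$ $h$ preserves $\mathscr{F}^s_f$'' (differentiate $h\circ f^n=g^n\circ h$ at periodic points, then invoke Gogolev--Guysinsky) is exactly the paper's route via Remark \ref{1 rmk smooth}, and the first step of the converse --- Theorem \ref{1 thm Anosov} yields equal center periodic data --- is also the paper's step. Where you diverge is the last step of the converse: the paper simply cites \cite[Lemma 5]{GG}, which states that equal center periodic data forces $h$ to be smooth along the one-dimensional, uniformly contracted center foliation (the perturbative hypothesis of \cite{GG} being needed only for integrability of $E^c_f$ and $h$-invariance of $\mathscr{F}^c_f$, both of which hold here by \cite{Po}). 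You instead attempt to re-prove that implication from scratch via leafwise normal forms, affine holonomies and a renormalization identity.

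That re-derivation has a genuine gap at exactly the point you identify as the crux. Passing from $G(\beta t)=\beta G(t)$, $\beta$ ranging over periodic center multipliers, to linearity of $G$ requires the multiplicative group generated by $\{\|Df^{n_p}|_{E^c_f(p)}\|\}$ to be dense in $(0,\infty)$. This fails in situations that genuinely occur under the hypotheses of the corollary: by Theorem \ref{2 thm dichotomy}, every non-accessible Anosov $f$ in this setting satisfies $\lambda^c(p,f)=\lambda^c(A)$ for all periodic $p$, so the group generated is the discrete group $\{e^{n\lambda^c(A)}:n\in\ZZ\}$; this covers the linear model and, more to the point, precisely the regime in which $h$ can preserve $\mathscr{F}^s_f$ without accessibility. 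Over a discrete group the homogeneity relation admits the non-linear solutions you yourself exhibit, such as $t\mapsto t\bigl(2+\sin(2\pi\log_\lambda|t|)\bigr)$, so nothing in your argument forces $\bar h_L$ to be linear there. You call this the ``exceptional case, treated separately,'' but it is not exceptional and you never treat it. There are also smaller loose ends: a non-periodic center leaf has no ``return map,'' so the linearization must be the non-stationary normal form $\Phi_{f(x)}\circ f\circ\Phi_x^{-1}$ linear along orbits of leaves; homogeneity for all $\beta>0$ only gives linearity of $G$ on each half-line, so a possible corner at the base point must be excluded by varying the base point; and in the easy direction you must justify that the leafwise derivative of $h$ at the periodic point is nonzero. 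All of this is avoided by finishing as the paper does, with a citation of \cite[Lemma 5]{GG}, whose proof (Liv\v{s}ic plus the standard bootstrap) does not require density of the periodic spectrum.
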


\begin{remark}\label{1 rmk smooth}
One can get	Corollary \ref{1 cor} from Theorem \ref{1 thm Anosov} and \cite[Lemma 5 and Lemma 6]{GG} which state that the same center periodic data of $f$ and $g$  implies that $h$ is smooth along the center foliation and $h$ preserves the strong stable foliation. We mention that the assumption of local perturbation in \cite{GG} is just for getting that the center bundle of $f$ is integrable and preserved by $h$.  As mentioned above, we now have these two properties. For the inverse conclusion, for any periodic point $p$ with period $n$, since $h$ is smooth along the center foliation, we can take the directional derivative of $f^n=h^{-1}\circ g^n\circ h$ in the direction of $E^c$, we can get that $\lambda^c(p,f)=\lambda^c(h(p),g)$. From the conclusion of \cite{GG}, we can know that $h$ preserves the strong stable foliation.
\end{remark}

We are also  curious about the higher-dimensional case of Theorem \ref{1 thm Anosov} and Corollary \ref{1 cor}, since Gogolev extended the result of local rigidity in \cite{GG} to the higher-dimensional case \cite{G2008}. For convenience, we state this question in Section  \ref{sec: 3 torus}, see Question \ref{question}.

\vspace{2mm}

In this paper, we also consider partially hyperbolic diffeomorphisms on general closed Riemannian manifolds  with integrable one-dimensional center bundles and with accessibility. We call a  partially hyperbolic diffeomorphism $f:M\to M$ is \textit{accessible}, if  any two points on $M$ can be connected by curves each of which is  tangent to $E^s_f$ or $E^u_f$. It is well known that accessibility is  $C^r$-dense and $C^1$-open among $C^r\ (r\geq 1)$ partially hyperbolic diffeomorphisms with one-dimensional center bundles \cite{D03,DW03,BHHTU,HHU08}.

Let $f,g:M\to M$ be two  partially hyperbolic diffeomorphisms and dynamically coherent.  We call  $f$ is \textit{fully conjugate} to $g$, if there exists a homeomorphsim $h:M\to M$ such that $f$ is conjugate to $g$ via $h$ and $h$ preserves the stable foliations, unstable foliations and center foliations, respectively, i.e., $$h\big( \mathcal{F}^{\sigma}_f(x) \big)= \mathcal{F}^{\sigma}_g\big(h(x)\big), \ \   \forall x\in M\ \ {\rm and}\ \  \sigma=s,c,u.$$ It is clear that if $f$ is accessible and fully conjugate to $g$, then $g$ is also accessible.  Now, we can extend Theorem \ref{1 thm Anosov} to the following one in which the dimension of the manifold $M$ can be bigger than $3$. We mention that  $h$ does not need to be homotopic to identity in Theorem \ref{1 thm ph}.

\begin{theorem}\label{1 thm ph}
Let $f,g:M\to M$ be two $C^{1+\alpha}$-smooth partially hyperbolic diffeomorphisms with one-dimensional center bundles and dynamically coherent. If $f$ is accessible and  fully conjugate to $g$ via a homeomorphism $h$,  then $h$ is $C^1$ along the center foliation.
\end{theorem}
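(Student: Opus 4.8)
The plan is to reconstruct the conjugacy along center leaves as a holonomy-invariant object and then upgrade its regularity using the $su$-path structure supplied by accessibility. First I would fix a center leaf $\mathscr{F}^c_f(x)$ and study the restriction $h|_{\mathscr{F}^c_f(x)}\colon \mathscr{F}^c_f(x)\to \mathscr{F}^c_g(h(x))$, which is a well-defined homeomorphism between one-dimensional manifolds because $h$ preserves center foliations. Since center bundles are one-dimensional, both $f$ and $g$ are leaf-conjugate to their center dynamics in a strong sense, and the natural first step is to show that $h$ restricted to a center leaf intertwines the $C^{1+\alpha}$ one-dimensional dynamics $f|_{\mathscr{F}^c_f}$ and $g|_{\mathscr{F}^c_g}$ — but this by itself only gives Hölder regularity, so the real work is to exploit the transverse $su$-structure.

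The key idea is the same as in the Anosov $\TT^3$ case: use stable and unstable holonomies to "transport" the center conjugacy. Because $h$ preserves $\mathscr{F}^s_f,\mathscr{F}^u_f$ and $\mathscr{F}^c_f$, and because along stable (resp. unstable) leaves $h$ conjugates uniform contractions (resp. expansions), $h$ is $C^{1+\beta}$ along stable and unstable leaves — this is the Journé-type / non-stationary linearization argument for one-sided hyperbolic dynamics, and it gives that the stable and unstable holonomies of $f$ are carried by $h$ to those of $g$. Accessibility then says any two points are joined by an $su$-path; along such a path the center conjugacy $h|_{\mathscr{F}^c}$ is obtained by composing $f$- and $g$-holonomies between center leaves, which are $C^{1+\beta}$ maps. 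Thus $h$ restricted to a center leaf is locally a composition of finitely many $C^{1+\beta}$ diffeomorphisms, hence $C^{1+\beta}$; uniformity of the accessibility "engulfing" (finitely many $su$-legs with uniformly bounded length, which is $C^1$-open and follows from a standard compactness/accessibility argument) then gives uniform $C^{1+\beta}$ control, and finally a bootstrap using that $h$ conjugates $f$ to $g$ along center leaves pushes the regularity up to match the $C^{1+\alpha}$ smoothness of the maps, giving $h$ smooth along $\mathscr{F}^c$.

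In more detail, the steps in order would be: (1) show $h$ is uniformly $C^{1+\beta}$ along $\mathscr{F}^s_f$ and $\mathscr{F}^u_f$, using that $Df|_{E^s}$, $Dg|_{E^s}$ are uniform contractions and $h$ conjugates them (nonstationary linearization plus bunching, as in \cite{Llave1992,G2008}); (2) deduce that the stable holonomy maps $\mathrm{Hol}^{s}_f$ between center leaves are conjugated by $h$ to $\mathrm{Hol}^s_g$, and similarly for unstable holonomies, and that these holonomies are themselves $C^{1+\beta}$ (here dynamical coherence and one-dimensionality of $E^c$ are used so the holonomies land in center leaves); (3) use accessibility of $f$ to write, for nearby points $y,z$ on a center leaf region, the map $h$ as a uniformly bounded composition $\mathrm{Hol}^{u}_g\circ\mathrm{Hol}^s_g\circ\cdots\circ(h|_{\text{one leg}})\circ\cdots\circ\mathrm{Hol}^s_f\circ\mathrm{Hol}^u_f$, concluding $h|_{\mathscr{F}^c_f}$ is $C^{1+\beta}$; (4) bootstrap this regularity along the center dynamics: since $h\circ f=g\circ h$ on center leaves and $f,g$ are $C^{1+\alpha}$, differentiate and iterate to improve $C^{1+\beta}$ to $C^{k}$ for all relevant $k$ (a Schwartzman/Livšic-type or straightforward invariance-equation argument), yielding smoothness along $\mathscr{F}^c$. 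The main obstacle I anticipate is step (3): making the accessibility argument quantitative, i.e. ensuring the number and length of $su$-legs needed to connect two nearby points on a center leaf is uniformly bounded so that the composition of holonomies has uniformly controlled $C^{1+\beta}$ norm — this requires a local accessibility / "accessible in uniformly bounded steps" statement, which is available for one-dimensional center (e.g. via \cite{DW03,BHHTU}) but must be set up carefully; the regularity of $su$-holonomies in step (2) in the merely partially hyperbolic (non-Anosov) setting is the secondary technical point, handled by the standard $C^1$-section / bunching estimates since $E^c$ is one-dimensional and $f$ is $C^{1+\alpha}$.
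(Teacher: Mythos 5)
Your overall strategy---propagating regularity of $h$ along center leaves through $su$-paths using the $C^1$ stable/unstable holonomies (available because a one-dimensional center bundle gives bunching)---is the same skeleton as the paper's proof, and your steps (2)--(3) correctly identify the commutation relation $h|_{\mathscr{F}^c_f(x_{i+1})}=\mathrm{Hol}^s_{g}\circ h|_{\mathscr{F}^c_f(x_i)}\circ \mathrm{Hol}^s_{f}$ together with the uniform bound on the number and length of the legs. But there are two genuine problems. First, your step (1) is false as stated: the fact that $h$ conjugates the uniform contractions $f|_{\mathscr{F}^s_f}$ and $g|_{\mathscr{F}^s_g}$ does \emph{not} imply that $h$ is $C^{1+\beta}$ along stable leaves---that is exactly the kind of rigidity conclusion that requires matching stable periodic data. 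For instance, for conjugate Anosov diffeomorphisms of $\TT^2$ the conjugacy automatically preserves and intertwines the stable foliations, yet it is generically not differentiable along them. Fortunately this step is also unnecessary: that $h$ intertwines the $f$- and $g$-holonomies between center leaves follows directly from the hypothesis that $h$ preserves all three foliations, with no smoothness of $h$ in the $s$/$u$ directions.

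Second, and this is the real gap, your propagation scheme in step (3) is circular: the composition you write down expresses $h$ restricted to one center leaf in terms of $h$ restricted to \emph{another} center leaf sandwiched between $C^1$ holonomies, so it can transport differentiability from leaf to leaf but cannot create it. You never exhibit a single point at which $h|_{\mathscr{F}^c_f}$ is differentiable to serve as the seed. The paper's key (and elementary) observation is that $h$ restricted to a local center leaf is a monotone map between one-dimensional $C^1$ arcs, hence by Lebesgue's theorem differentiable at some point $p$; accessibility together with the $C^1$ holonomies then spreads differentiability from $p$ to every point of $M$, and the same propagation shows the derivative is nowhere zero (if it vanished at one point it would vanish everywhere, contradicting that $h$ is a homeomorphism on each center leaf). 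Without such a seed your argument does not close. Your step (4) bootstrap to higher regularity is likewise not part of, and not needed for, what the theorem asserts.
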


 As a corollary,  we  apply Theorem \ref{1 thm ph} to  partially hyperbolic diffeomorphisms  \textit{derived from Anosov} on $\mathbb{T}^3$. We call $f:\mathbb{T}^3\to \TT^3$ is derived from Anosov, also called a \textit{DA} diffeomorphism, i.e.,  its linearization $f_*:\pi_1(\TT^3)\to \pi_1(\TT^3)$  induces an Anosov automorphism.  We mention that the partially hyperbolic diffeomorphism derived from Anosov is one of the three types of partially hyperbolic diffeomorphisms on $3$-manifolds with solvable fundamental group\cite{HP2015}.

\begin{corollary} \label{1 cor DA}
	Let $f, g :\mathbb{T}^{3}\to \mathbb{T}^{3}$ be two $C^{1+\alpha}$-smooth partially hyperbolic diffeomorphisms  and  conjugate  via  a homeomorphism  $h$. Suppose  the linearization of $f$  is an Anosov automorphism with one-dimensional unstable bundle.  If $h$ preserves the stable foliation, then $h$ is $C^1$ along the center foliation.
\end{corollary}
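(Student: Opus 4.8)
The plan is to deduce Corollary~\ref{1 cor DA} from Theorem~\ref{1 thm ph}; the work lies in checking the hypotheses of that theorem. From $h\circ f=g\circ h$ one gets $g_*=h_*\circ f_*\circ h_*^{-1}$ on $\pi_1(\TT^3)\cong\ZZ^3$, so the linearization of $g$ is conjugate in $GL(3,\ZZ)$ to that of $f$; hence $g$ is also derived from Anosov with one-dimensional unstable bundle, and $f$ and $g$ share (up to conjugacy in $GL(3,\ZZ)$) the same hyperbolic linearization $A$. By the structure theory for partially hyperbolic diffeomorphisms on $\TT^3$ in the isotopy class of a hyperbolic automorphism (see \cite{HP2015}), both $f$ and $g$ are dynamically coherent, their center bundles are one-dimensional and uniquely integrable, and each is leaf conjugate to its linearization; in particular the conclusion ``$h$ is smooth along the center foliation'' is meaningful.

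Next I would verify that $f$ is accessible and that $h$ is a full conjugacy. For accessibility, observe that $A$ acts on $H_2(\TT^3)\cong\ZZ^3$ through $\wedge^2A$, whose eigenvalues are the pairwise products of the eigenvalues of $A$; since $\det A=\pm1$ and $A$ has exactly one eigenvalue of modulus $>1$ (and none of modulus $1$), each such product has modulus different from $1$, so none is a root of unity. Consequently no power of $f$ can preserve an embedded $2$-torus tangent to $E^s_f\oplus E^u_f$ (such a torus would be incompressible, with its class in $H_2(\TT^3)$ fixed by a power of $A$, which is impossible); thus $f$ carries no periodic $su$-torus, and the accessibility dichotomy for partially hyperbolic diffeomorphisms on $3$-manifolds (see \cite{HHU08,BHHTU}) forces $f$ to be accessible. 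For the full conjugacy, $h$ preserves the strong stable foliation by hypothesis; it preserves the center and the (one-dimensional) strong unstable foliations by the properties of topological conjugacies between dynamically coherent partially hyperbolic diffeomorphisms on $\TT^3$ (\cite{Po}, cf.\ Remark~\ref{2 rmk Anosov conjugacy preserves center}). One way to see the latter is to route the identifications through $A$: the semiconjugacies $\pi_f,\pi_g$ from $f,g$ to $A$ carry center leaves onto center leaves and strong unstable leaves onto strong unstable leaves of $A$, the induced map $\phi$ with $\pi_g\circ h=\phi\circ\pi_f$ is a homeomorphism of $\TT^3$ commuting with $A$ and homotopic to the identity, and any such $\phi$ is a translation by an $A$-fixed point (by a contraction estimate on its displacement function), hence preserves the linear foliations of $A$; transporting back yields $h(\mathscr{F}^c_f)=\mathscr{F}^c_g$ and $h(\mathscr{F}^u_f)=\mathscr{F}^u_g$. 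Thus $h$ is a full conjugacy.

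Now $f$ has a one-dimensional center bundle, is dynamically coherent and accessible, and is fully conjugate to $g$ via $h$, so Theorem~\ref{1 thm ph} applies and shows that $h$ is smooth along the center foliation, which is precisely Corollary~\ref{1 cor DA}. The main obstacle is the middle step: for a genuinely non-Anosov derived-from-Anosov system the center bundle need not be uniformly contracting — in contrast with Theorem~\ref{1 thm Anosov} — and the map to the linearization is only a semiconjugacy, so verifying that $h$ respects the center and strong unstable foliations, as well as obtaining accessibility, genuinely relies on the global structure of the invariant foliations on $\TT^3$: quasi-isometry in the universal cover and leaf conjugacy to the linear model.
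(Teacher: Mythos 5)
There is a genuine gap: your claim that $f$ must be accessible is false, and the non-accessible case is exactly where the corollary requires a different argument. The dichotomy for partially hyperbolic diffeomorphisms on $\TT^3$ in the isotopy class of a hyperbolic automorphism is not ``accessible versus periodic $su$-torus''; it is ``accessible versus $su$-integrable,'' and an integrable $E^s_f\oplus E^u_f$ on $\TT^3$ typically foliates by \emph{dense planar leaves}, not by tori. Your $\wedge^2A$ computation correctly rules out a periodic $su$-torus, but that does not rule out $su$-integrability. The simplest counterexample to your step is $f=g=A$ with $h=\mathrm{Id}$: all hypotheses of Corollary \ref{1 cor DA} hold, there is no $su$-torus, yet $A$ is not accessible (its $su$-planes are accessibility classes). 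So Theorem \ref{1 thm ph} cannot be invoked in this branch, and your proof does not cover it. The paper instead uses the dichotomy of Theorem \ref{2 thm dichotomy} (from \cite{GanS,HS}): if $f$ is not accessible, then $f$ is Anosov with $\lambda^c(p,f)=\lambda^c(A)$ for every periodic point $p$, and the same holds for $g$ (whose linearization is also $A$); hence $f$ and $g$ have the same center periodic data, and smoothness of $h$ along the center foliation then follows from the Gogolev--Guysinsky lemmas quoted in Remark \ref{1 rmk smooth}. This rigidity input is indispensable and absent from your proposal.

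Your treatment of the accessible branch is essentially the paper's: both route the foliation-preservation through the linearization. One caution there: your induced map $\phi$ with $\pi_g\circ h=\phi\circ\pi_f$ is not obviously well defined, since $\pi_f,\pi_g$ are only semiconjugacies; the paper avoids this by observing that $H_f\circ H^{-1}$ is itself a semiconjugacy from $G$ to $A$ at bounded distance from the identity, hence equals $H_g$ by the uniqueness clause of Proposition \ref{2 prop semi-conjugate}, and then deduces $H(\widetilde{\mathscr{F}}^{c}_f)=\widetilde{\mathscr{F}}^{c}_g$ and $H(\widetilde{\mathscr{F}}^{u}_f)=\widetilde{\mathscr{F}}^{u}_g$ from the stated foliation properties of $H_f,H_g$ together with the hypothesis that $H$ preserves the stable foliation. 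Note also that this identification of the two linearizations with a single $A$ uses the standing assumption of that section that $h$ is homotopic to the identity.
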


It is clear that Theorem \ref{1 thm Anosov} is a special case of Corollary \ref{1 cor DA}, as
we can take the directional derivative of $f^n=h^{-1}\circ g^n\circ h$ in the direction of $E^c$ at periodic point $p$ with period $n$ to get the same center periodic data.
Another special case of Corollary \ref{1 cor DA} is that $g=f_*$ which has been studied in \cite{GanS}. We  refer readers to \cite{gogolevshi} for higher-dimensional case under the assumption of $g=f_*$.

\section*{Acknowledgements}
\addcontentsline{toc}{section}{Acknowledgements}
\qquad We are grateful for the valuable communication and suggestions from Shaobo Gan and Yi Shi.
Thanks for the comments from the annoymous referees about Remark 2.7 and Remark 3.4.
The authors were partially supported by National Key R\&D
Program of China (2021YFA1001900).

\section{Rigidity of center Lyapunov exponents in the accessible case}

\qquad In this section, we prove Theorem \ref{1 thm ph}. First of all, we recall some basic notions  and useful properties of partially hyperbolic diffeomorphisms.
Let $M$ be a smooth cloesd Riemannian manifold and $f:M\to M$ be a  diffeomorphism admitting partially hyperbolic splitting $TM=E_f^{s}\oplus E_f^{c}\oplus E_f^{u}$ with dim$E^c_f=1$.
The \textit{Lyapunov exponent} on $E^{c}_f$ at point $x$, if it exists, is denoted by $\lambda^{c}(x,f)$ and defined as
$$\lambda^{c}(x,f)=\lim_{n\to +\infty} \frac{1}{n}\ {\rm log}\|Df^n|_{E^c_f(x)}\|.$$
Assume further that $f$ is dynamically coherent. Define the \textit{local leaf} with size $\delta>0$ by $$\mathcal{F}^{\sigma}_f(x,\delta):=\big\{y\in \mathcal{F}^{\sigma}_f(x)\ | \ d_{\mathcal{F}^{\sigma}_f}(x,y)< \delta \big\},$$
 where $\sigma=s,c,u,cs,cu$ and $d_{\mathcal{F}^{\sigma}_f}(\cdot,\cdot)$ is the metric on $\mathcal{F}^{\sigma}_f$ induced by the Riemannian metric on the base space. By coherence, the local stable/unstable foliation $\mathcal{F}^{s}_f\big/ \mathcal{F}^{u}_f$ induces \textit{holonomy maps}  restricted on $\mathcal{F}^{cs}_f \big/ \mathcal{F}^{cu}_f$ as follow,
 \begin{align*}
 	{\rm Hol}_{f,x,y}^{s/u}: \mathcal{F}^c_f(x,\delta_1)&\longrightarrow \mathcal{F}^c_f(y,\delta_2),\\
 	 z &\longmapsto \mathcal{F}^{c}_f(y,\delta_2)\cap \mathcal{F}^{s/u}_f(z,R),
 \end{align*}
 such that 	${\rm Hol}_{f,x,y}^{s/u}$ is a homeomorphism for some $x\in M$ and $y\in\mathcal{F}_f^{s/u}(x)$ with constants   $\delta_1>0,\delta_2>0$ and $R>0$ which rely on the choice of $x,y$.

 Since one-dimensional center bundle automatically implies the center bunching condition in the classical work  \cite{PSW}, we have the following proposition adapted to our case. We also refer to \cite[Theorem 7.1]{pesinbook} for  this property in the view of absolutely continuous holonomy map.
  \begin{proposition}[\cite{PSW}]\label{2 prop smooth holonomy}
  	Let $f:M\to M$ be a $C^{1+\alpha}$-smooth partially hyperbolic diffeomorphism with one-dimensional center bundle. If $f$ is dynamically coherent, then the local unstable and local stable holonomy maps restricted respectively on $\mathcal{F}_f^{cu}$ and $\mathcal{F}_f^{cs}$ are uniformly $C^{1}$-smooth.
  \end{proposition}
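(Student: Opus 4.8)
The plan is to prove the statement for the stable holonomies on $\mathscr{F}^{cs}_f$; the assertion for the unstable holonomies on $\mathscr{F}^{cu}_f$ then follows verbatim by replacing $f$ with $f^{-1}$, since $E^u_f$ is the stable bundle and $E^c_f$ the (still one-dimensional) center bundle of $f^{-1}$. Fix $x\in M$ and $y\in\mathscr{F}^s_f(x)$ and write $H:={\rm Hol}^{s}_{f,x,y}$. Because $\dim E^c_f=1$, the restriction $Df|_{E^c_f(z)}$ is, up to sign, multiplication by the scalar $J(z):=\|Df|_{E^c_f(z)}\|>0$; since $f$ is $C^{1+\alpha}$ and the bundle $E^c_f$ is H\"older continuous, $\log J$ is $\alpha'$-H\"older for some $\alpha'\in(0,\alpha]$ and is bounded away from $0$ and $\infty$. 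Parametrising each center leaf by arc length turns $H$ into a map between one-dimensional manifolds, so proving $H\in C^1$ amounts to producing a continuous positive function $z\mapsto P(z)$ and showing it is the derivative of $H$.

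The engine is the commutation relation coming from $f$-invariance of the stable and center foliations, namely $f\circ {\rm Hol}^{s}_{f,x,y}={\rm Hol}^{s}_{f,fx,fy}\circ f$, whence for every $n\geq 1$
\[
H=f^{-n}\circ {\rm Hol}^{s}_{f,f^nx,f^ny}\circ f^{n}.
\]
Writing $w:=H(z)\in\mathscr{F}^{s}_f(z)$ and applying the chain rule along the center direction, the (formal) center Jacobian of $H$ at $z$ factorises as
\[
\Bigl(\prod_{k=0}^{n-1}J(f^kz)\Bigr)\Bigl(\prod_{k=0}^{n-1}J(f^kw)\Bigr)^{-1}\cdot D^c_n(z),
\]
where $D^c_n(z)$ is the center Jacobian at $f^nz$ of the holonomy between the center leaves through $f^nx$ and $f^ny$. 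Since $y\in\mathscr{F}^s_f(x)$ and $w\in\mathscr{F}^s_f(z)$, the pairs $(f^nx,f^ny)$ and $(f^nz,f^nw)$ contract exponentially along the strong stable foliation, so one expects $D^c_n(z)\to1$. Using that $d(f^kz,f^kw)$ decays geometrically and that $\log J$ is H\"older, the telescoped sum $\sum_{k\ge0}\bigl(\log J(f^kz)-\log J(f^kw)\bigr)$ converges geometrically and uniformly, so the product above converges to the continuous, uniformly positive function
\[
P(z)=\prod_{k=0}^{\infty}\frac{J(f^kz)}{J\bigl(f^k H(z)\bigr)}.
\]
This is exactly where $f\in C^{1+\alpha}$ is used, and it is also where the relevant bunching enters: writing $m(A)=\|A^{-1}\|^{-1}$ for the conorm, the $s$-bunching condition of \cite{PSW} requires $\|Df|_{E^s_f}\|\cdot\|Df|_{E^c_f}\|/m(Df|_{E^c_f})<1$ pointwise, and because $\dim E^c_f=1$ forces $\|Df|_{E^c_f}\|=m(Df|_{E^c_f})$ at every point, this reduces to the partial hyperbolicity inequality $\|Df|_{E^s_f}\|<1$ and thus holds automatically.

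It remains to upgrade the continuous candidate $P$ to the genuine derivative of $H$, which is the main obstacle, since a priori $H$ is only known to be a H\"older homeomorphism and the factorisation above is merely formal. Here I would invoke the invariant-section theorem of Hirsch--Pugh--Shub in the form used in \cite{PSW}: over the stable holonomy one forms the fibre bundle of candidate center-Jacobians, on which $f$ acts by the graph transform $A\mapsto J(w)^{-1}\,A\,J(z)$ dictated by the commutation relation, and the bunching inequality just verified is precisely the spectral condition guaranteeing that this transform contracts fibres faster than the base expands. The section theorem then yields a unique continuous invariant section, the telescoping identity identifies it with $P$, and the standard a priori equi-Lipschitz (absolute-continuity) bounds on stable holonomies give $D^c_n(z)\to1$ uniformly, so that $P$ is genuinely $DH$ and the estimates are uniform in the base point. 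This produces uniform $C^1$-smoothness of the stable holonomies on $\mathscr{F}^{cs}_f$; applying the identical argument to $f^{-1}$ gives it for the unstable holonomies on $\mathscr{F}^{cu}_f$, completing the proof.
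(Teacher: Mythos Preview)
The paper does not prove this proposition: it is quoted as a known result from \cite{PSW}, with an additional pointer to \cite[Theorem 7.1]{pesinbook}, and no argument is given beyond the remark that a one-dimensional center bundle automatically satisfies the center bunching hypothesis of \cite{PSW}. So there is no ``paper's own proof'' to compare your attempt against.

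That said, your sketch is a reasonable outline of the PSW mechanism. The observation that $\dim E^c_f=1$ forces $\|Df|_{E^c_f}\|=m(Df|_{E^c_f})$, so that the $s$-bunching inequality collapses to $\|Df|_{E^s_f}\|<1$, is exactly the point the paper is alluding to. The candidate derivative $P(z)=\prod_{k\ge0}J(f^kz)/J(f^kH(z))$ and its convergence via H\"older regularity of $\log J$ along exponentially contracting stable orbits is the standard cocycle argument. Where your write-up is weakest is the ``upgrade'' step: you correctly flag that passing from a continuous candidate $P$ to the genuine derivative of $H$ is the crux, but the appeal to the $C^r$ section theorem is left at the level of a gesture (``I would invoke\ldots''), and the claim that $D^c_n(z)\to1$ uniformly needs a concrete input (e.g.\ uniform transversality of $E^s_f$ and $E^c_f$ inside $\mathscr{F}^{cs}_f$ plus the exponential contraction, or the absolute continuity estimates you mention). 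If you want this to stand as a proof rather than a sketch, that step should be made precise; otherwise, citing \cite{PSW} as the paper does is the honest route.
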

\begin{remark}
 The uniformly $C^1$-smooth holonomy maps in Proposition \ref{2 prop smooth holonomy} means that the derivative $D{\rm Hol}^{s/u}_{f,x,y}$ is continuous with respect to points $x,y\in M$. We refer to \cite{PSW2004} for delicate analysis for the partial derivatives of the (un)stable bundle  with respect to center bundle.
\end{remark}

   Recall that we say $f$ is accessible, if  any two points on $M$ can be connected by curves each of which is  tangent to $E^s_f$ or $E^u_f$. In fact, these curves have uniform length and number \cite[Proposition 4]{D03}. For convenience of readers, we state it as follow.

     \begin{proposition}[\cite{D03}]\label{2 prop finite accessibility}
     Let $f:M\to M$ be a $C^1$-smooth partially hyperbolic diffeomrophism and accessible. Then there exists $N\in\NN$ and $R>0$ such that  any two points $x,y\in M$ can be connected by at most $N$ curves each of which has length at most $R$ and tangent to $E^s_f$ or $E^u_f$.
     \end{proposition}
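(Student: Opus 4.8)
The plan is to run a compactness argument on $M\times M$ built from three elementary features of $su$-paths, combined with the Baire category theorem. Call a curve an \emph{$su$-path} if it is a finite concatenation of arcs, its \emph{legs}, each tangent to $E^s_f$ or $E^u_f$. For $k\in\NN$ and $r>0$ write $A_{k,r}(x)$ for the set of endpoints of $su$-paths starting at $x$ with at most $k$ legs, each of length at most $r$, and set $R_{k,r}=\{(x,y)\in M\times M: y\in A_{k,r}(x)\}$. First I would record three structural properties: (i) \emph{reversibility}: the reverse of an $su$-path is an $su$-path with the same number of legs and the same leg-lengths, so each $R_{k,r}$ is symmetric; (ii) \emph{concatenation}: $A_{k,r}\big(A_{k',r'}(x)\big)\subseteq A_{k+k',\max(r,r')}(x)$; and (iii) \emph{closedness}: each $R_{k,r}$ is closed in $M\times M$. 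Accessibility says precisely that $M\times M=\bigcup_{k,m\in\NN}R_{k,m}$, a countable increasing union of closed sets.

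Then I would apply Baire: since $M\times M$ is compact, hence a Baire space, some $R_{k_0,m_0}$ must have nonempty interior. Choose an open product $U\times V\subseteq R_{k_0,m_0}$, so every point of $V$ is reachable from every point of $U$ with complexity $(k_0,m_0)$. The key trick is to upgrade this to \emph{uniform local accessibility inside a fixed open set}: for any $u,u'\in U$ and any fixed $v\in V$, reversibility gives $u'\in A_{k_0,m_0}(v)$ while $v\in A_{k_0,m_0}(u)$, and concatenating yields $u'\in A_{2k_0,m_0}(u)$. Hence $A_{2k_0,m_0}(u)\supseteq U$ for every $u\in U$; that is, the points of the open set $U$ reach one another with uniformly bounded complexity.

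The final step propagates this to all of $M$ by a second compactness argument. Fix $u_0\in U$. For each $x\in M$ accessibility provides an $su$-path from $x$ to $u_0$, and by continuity of $\mathscr{F}^s_f,\mathscr{F}^u_f$ the endpoint of this path depends continuously on its starting point, so every $x'$ near $x$ reaches a point of the open set $U$ using the same number of legs and leg-lengths. These neighborhoods cover $M$; a finite subcover produces $N_1,R_1$ such that $A_{N_1,R_1}(x)$ meets $U$ for every $x\in M$. Given arbitrary $x,y\in M$, pick $u_1\in A_{N_1,R_1}(x)\cap U$ and $u_2\in A_{N_1,R_1}(y)\cap U$; then $x\to u_1$ ($\le N_1$ legs), $u_1\to u_2$ inside $U$ ($\le 2k_0$ legs, using $A_{2k_0,m_0}(u_1)\supseteq U$), and $u_2\to y$ (the reverse of the path realizing $u_2\in A_{N_1,R_1}(y)$, again $\le N_1$ legs) concatenate to an $su$-path from $x$ to $y$. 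Setting $N=2N_1+2k_0$ and $R=\max(R_1,m_0)$ proves the proposition.

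The main obstacle is the closedness property (iii), which is what makes the Baire step legitimate: one must show that a $C^0$-limit of $su$-paths with at most $k$ legs of length at most $r$ is again such an $su$-path. This is where the hypotheses that $\mathscr{F}^s_f,\mathscr{F}^u_f$ are continuous foliations with uniformly $C^1$ leaves enter, via an Arzel\`a--Ascoli argument: the legs are equi-Lipschitz with uniformly bounded length, so a subsequence converges, and the limiting arcs remain tangent to $E^s_f$ or $E^u_f$ because the bundles are continuous. The same regularity underlies the continuous dependence of endpoints used in the propagation step. Once closedness and this continuity are established, the reversibility-plus-Baire mechanism is exactly what converts qualitative accessibility into the quantitative uniform bound, with no control needed on how motion in the center direction is generated.
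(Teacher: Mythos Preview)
The paper does not prove this proposition at all: it is quoted verbatim as \cite[Proposition 4]{D03} and used as a black box in the proof of Theorem \ref{1 thm ph}. So there is no ``paper's own proof'' to compare against.

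That said, your argument is correct and is essentially the standard proof of uniform accessibility (this is, in spirit, the argument in \cite{D03} and in Wilkinson's surveys). The Baire step combined with the reversibility/concatenation trick to produce a uniformly self-accessible open set $U$, followed by the compactness propagation, is exactly the mechanism used in the literature. Your identification of closedness of $R_{k,r}$ as the only nontrivial input is also accurate, and your explanation via Arzel\`a--Ascoli and continuity of the subbundles is the right one; one small remark is that to guarantee the limit curve lies in a single leaf (not merely tangent to the bundle) one typically uses that $E^s_f$ and $E^u_f$ are uniquely integrable with plaques varying continuously, which holds here. With that caveat your sketch would pass as a complete proof of the cited proposition.
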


   Now we can prove Theorem \ref{1 thm ph}.

    \begin{proof}[Proof of Theorem \ref{1 thm ph}]
    Let $h:M\to M$ be a full conjugacy	between $f$ and $g$ such that
     $$h\big( \mathcal{F}^{\sigma}_f(x) \big)= \mathcal{F}^{\sigma}_g\big(h(x)\big), \ \   \forall x\in M\ \ {\rm and}\ \  \sigma=s,c,u.$$
    Then for any point $x\in M$, $h$ restricted on the local center leaf $\mathcal{F}_f^c(x,\delta)$ of $x$ is a homeomorphism onto the image. For simplify, we denote it by
    $$h_{c,x}:\mathcal{F}_f^c(x,\delta)\to \mathcal{F}_g^c\big(h(x)).$$
    Fix a point $z_0\in M$. Since $\mathcal{F}_f^c(z_0,\delta)$ and $\mathcal{F}_g^c\big(h(z_0)\big)$ are one-dimensional $C^1$-smooth embedded submanifolds, we get that $h_{c,z_0}$ is monotonic. Therefore, there exists a point $p\in \mathcal{F}_f^c(z_0,\delta)$ such that $h_{c,z_0}$ is differentiable at $p$, then $h_{c,p}$ is differentiable at $p$.  By the accessibility and the $C^1$-smooth holonomy maps, we can get that $h_{c,x}$ is differentiable at  $x$ for all $x\in M$.  More precisely, we have the following claim.

    \begin{claim}\label{3 claim holonomy get Dhx=0}
    For any $x\in M$, $h_{c,x}$ is differentiable at $x$.
    \end{claim}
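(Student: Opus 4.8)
The plan is to transport the differentiability of $h_c$ from the single point $p\in\mathscr{F}^c_f(z_0,\delta)$ to an arbitrary point $x\in M$ along an accessibility chain, exploiting that $su$-holonomies are $C^1$ (Proposition \ref{2 prop smooth holonomy}) and that the conjugacy $h$ intertwines the holonomies of $f$ with those of $g$ because it preserves all three foliations.

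First I would fix $x\in M$ and, by Proposition \ref{2 prop finite accessibility}, choose points $x=x_0,x_1,\dots,x_N=p$ with $x_{i+1}\in\mathscr{F}^{\sigma_i}_f(x_i)$, $\sigma_i\in\{s,u\}$, joined by curves of length at most $R$. Each leg yields a stable or unstable holonomy map ${\rm Hol}^{\sigma_i}_{f,x_i,x_{i+1}}$ between small center arcs, which by Proposition \ref{2 prop smooth holonomy} is a $C^1$ diffeomorphism onto its image; set $H_f:={\rm Hol}^{\sigma_{N-1}}_{f,x_{N-1},x_N}\circ\cdots\circ{\rm Hol}^{\sigma_0}_{f,x_0,x_1}$. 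Using the uniformity in Propositions \ref{2 prop smooth holonomy} and \ref{2 prop finite accessibility}, after shrinking domains $H_f$ is a $C^1$ diffeomorphism from a neighborhood of $x$ in $\mathscr{F}^c_f(x)$ onto a neighborhood of $p$ in $\mathscr{F}^c_f(p)$ with $H_f(x)=p$. Since $h$ maps $\mathscr{F}^{\sigma}_f$-leaves to $\mathscr{F}^{\sigma}_g$-leaves for $\sigma=s,c,u$, the points $h(x_i)$ form an accessibility chain for $g$, and the corresponding composed $g$-holonomy $H_g$ (a $C^1$ diffeomorphism sending a neighborhood of $h(x)$ in $\mathscr{F}^c_g(h(x))$ onto one of $h(p)$ in $\mathscr{F}^c_g(h(p))$, with $H_g(h(x))=h(p)$) satisfies the intertwining identity
\[
  h\circ H_f = H_g\circ h
\]
on a neighborhood of $x$ in $\mathscr{F}^c_f(x)$, where on the left $h$ stands for $h|_{\mathscr{F}^c_f(p)}$ and on the right for $h|_{\mathscr{F}^c_f(x)}=h_c$. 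Hence near $x$ one has $h_c=H_g^{-1}\circ h|_{\mathscr{F}^c_f(p)}\circ H_f$.

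Then the claim follows from the chain rule: $H_f$ is differentiable at $x$, $h|_{\mathscr{F}^c_f(p)}$ is differentiable at $H_f(x)=p$ by the choice of $p$, and $H_g^{-1}$ is $C^1$, hence differentiable at $h|_{\mathscr{F}^c_f(p)}(p)=h(p)$; composing these three differentiable maps shows $h_c$ is differentiable at $x$. Note that no lower bound on $|DH_f|$ is needed, since it is differentiability and not invertibility that is being propagated.

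The step I expect to be the main obstacle is organizational rather than conceptual: one must set up the composed holonomies so that their domains genuinely contain center neighborhoods of $x$ of a controlled size and are compatible along the whole chain, and one must verify the intertwining $h\circ H_f=H_g\circ h$ carefully — this uses precisely that $h$ preserves all three foliations at once, so that it carries the $su$-fiber defining each $f$-holonomy to the $su$-fiber defining the corresponding $g$-holonomy and respects the center leaves into which they project. Everything else reduces to Propositions \ref{2 prop smooth holonomy} and \ref{2 prop finite accessibility} together with the almost-everywhere differentiability of the monotone map $h_c$ already recorded above.
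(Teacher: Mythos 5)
Your proposal is correct and follows essentially the same route as the paper: an accessibility chain from Proposition \ref{2 prop finite accessibility}, the $C^1$ stable/unstable holonomies of Proposition \ref{2 prop smooth holonomy}, and the intertwining of $f$- and $g$-holonomies by $h$ (which the paper applies one leg at a time by induction, whereas you compose the whole chain before invoking the chain rule at $p$ --- a purely organizational difference). The only cosmetic caveat is that your notation $H_f$ clashes with the semi-conjugacy lift used later in the paper.
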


    \begin{proof}[Proof of Claim \ref{3 claim holonomy get Dhx=0}]
    	Since $f$ is accessible, in particular by Proposition \ref{2 prop finite accessibility}, there exists $N$ and $R$ such that for any $x\in M$, there exists at most $N$ local stable and unstable manifolds with length at most $R$ to connect $x$ and $p$. Hence, we can assume that there exist $p=x_0, x_1,...,x_n=x \ (n\leq N)$  such that
    	$$x_{i}\in \mathcal{F}^{s/u}_f(x_{i-1},R)\quad {\rm and} \quad x_{i+1}\in \mathcal{F}^{u/s}_f(x_i,R), \quad \forall 1\leq i\leq n-1. $$
    	Let $y_i=h(x_i)$ for all $0\leq i\leq n$. Since $h_{c,p}$ is differentiable at $p=x_0$, it suffices to prove that if $x_i$ is a differentiable point of $h_{c,x_i}$, then $x_{i+1}$ is a differentiable point of $h_{c,x_{i+1}}$.

    	Without loss of generality, we assume that $x_{i+1}\in \mathcal{F}^{s}_f(x_i,R)$.
    	Since $h$ preserves all invariant foliations of $f$ and $g$, for any $x_{i+1}$, there exists $\delta>0$ such that
    	$$ h_{c,x_{i+1}}(z)={\rm Hol}^s_{g,y_i,y_{i+1}} \circ h_{c,x_i} \circ {\rm Hol}^s_{f,x_{i+1},x_i}(z), \quad \forall z\in \mathcal{F}^c_f(x_{i+1},\delta).$$
    	See Figure 1. Notice that
    \begin{align}
     Dh_{c,x_{i+1}}(x_{i+1})=D{\rm Hol}^s_{g,y_i,y_{i+1}}(y_i)\circ Dh_{c,x_i}(x_i)\circ D{\rm Hol}^s_{f,x_{i+1},x_i}(x_{i+1}). \label{eq. Dh_c}
    \end{align}
    Since both Hol$^s_{f,x_{i+1},x_i}$ and Hol$^s_{g,y_i,y_{i+1}}$ are $C^1$-smooth within size $R>0$ (see Proposition \ref{2 prop smooth holonomy}), we get the proof of this claim.
    \end{proof}

    By Claim \ref{3 claim holonomy get Dhx=0}, we can also get that for any $x\in M$, $Dh_{c,x}(x)\neq 0$. Indeed, if there exists $q\in M$ such that   $Dh_{c,q}(q)= 0$, it follows from the proof of Claim \ref{3 claim holonomy get Dhx=0}  that $Dh_{c,x}(x)= 0$ for any $x\in M$. It contradicts with the fact that $h$ is a homeomorphism restricted on $\mathscr{F}^c_f(p)$. Then we get that $h_c$ is differentiable.

  \begin{figure}[htbp]
  	\centering
  	\includegraphics[width=10cm]{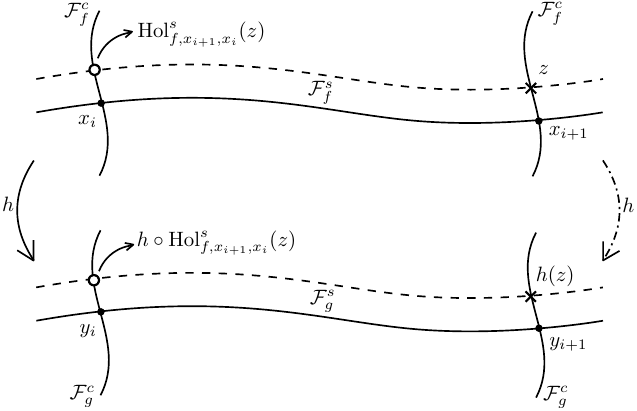}
  	\caption{The conjugacy restricted on $ \mathcal{F}^c_f(x_{i+1},\delta)$ is commutative with holonomy maps.}	
  \end{figure}

  For proving that $h_c$ is actually $C^1$-smooth, we need more precise property on accessibility. Since $f$ is accessible with one-dimensional center bundle, there are two unstable disks $U_1$ and $U_2$ being \textit{skew} \cite{HHU08,HU}, that is
 \begin{enumerate}
 	\item There exists a $( {\rm dim}E^u+1)$-dimensional $cu$-disk $V$ containing $U_2$ (note that $f$ is coherent).
 	\item $U_2$ separates $V$ into two connected components.
 	\item $U_1$ and $V$ define the holonomy maps Hol$^s: U_1\to V$ as Hol$^s(x)=\mathcal{F}^s_{\delta}(x)\cap V$ for small $\delta$.
 	\item Hol$^s(U_1)$ intersects both connected components of $V\backslash U_2$.
 \end{enumerate}

  \begin{figure}[htbp]
	\centering
	\includegraphics[width=8cm]{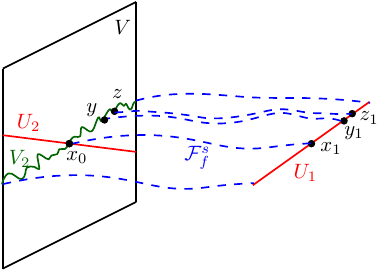}
	\caption{The $su$-holonomies vary continuously on $V_2$ in $C^1$-topology.}	
\end{figure}

\begin{claim}\label{claim Dhc continuous on V}
	For any $z\in V$, the derivative $Dh_{c,z}$ is continuous at $z$ restricted on $V$.
\end{claim}

\begin{proof}[Proof of Claim \ref{claim Dhc continuous on V}]

 Denote $V\cap {\rm Hol^s}(U_1)=V_2$. Take $x_0\in V_2\cap U_2$.
 Denote $\big({\rm Hol}^s\big)^{-1}(x_0)=x_1\in U_1$. For any $y\in V_2$, denote $\big({\rm Hol}^s\big)^{-1}(y)=y_1\in U_1$. Then $x_0$ and $y$ can be connected by $3$-legs of stable and unstable leaves through points $x_1$ and $y_1$.   Note  that if $z\in V_2$ is close to $y$, then the $3$-legs from $z$ to $x_0$ is close (in $C^1$-topology) to the $3$-legs from $y$ to $x_0$,  since  local stable and unstable manifolds vary continuously in $C^1$-topology (see Figure 2).  It follows from \eqref{eq. Dh_c} that
  \begin{align*}
 	Dh_{c,y}(y)=D{\rm Hol}^{sus}_{g,h(x_0),h(y)}(h(x_0))\circ Dh_{c,x_0}(x_0)\circ D{\rm Hol}^{sus}_{f,y,x_0}(y),
 \end{align*}
 and
 \begin{align*}
 	Dh_{c,z}(z)=D{\rm Hol}^{sus}_{g,h(x_0),h(z)}(h(x_0))\circ Dh_{c,x_0}(x_0)\circ D{\rm Hol}^{sus}_{f,z,x_0}(z),
 \end{align*}
where ${\rm Hol}^{sus}_{f,y,x_0}= {\rm Hol}^s_{f,x_1,x_0}  \circ {\rm Hol}^u_{f,y_1,x_1} \circ {\rm Hol}^s_{f,y,y_1}$ and the notation ${\rm Hol}^{sus}_{g,h(x_0),h(y)}$ is defined in the same sense.  By the uniform $C^1$ regularity of holonomy maps (see Proposition \ref{2 prop smooth holonomy}), we have that $Dh_{c,y}(y)$ varies continuously at $y$ restricted on $V_2$.

 Then we will "transfer" the  continuity of $Dh_c$ from $V_2$ to $V$ via the unstable foliation. Note that by the local product structure of foliation $\mathcal{F}^{cs}_f$ and $\mathcal{F}^u_f$, one has that every local center leaf intersects with $V_2$ once.  However, a local unstable leaf may intersect with $V_2$ infinitely many times (see Figure \ref{cont}). For convenience, we assume that $$V=\bigcup_{x\in V_2}\mathcal{F}^u_{f,{\rm loc}}(x),$$
 and $V$ is open in the topology of a  local $cu$-leaf. For any $z\in V$, denote $S(z)=\mathcal{F}^u_{f,{\rm loc}}(z)\cap V_2$.


  \begin{figure}[htbp]
 	\centering
 	\includegraphics[width=7cm]{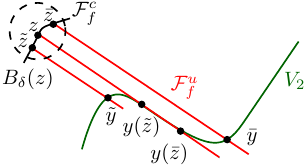}\\
 	\caption{Different $u$-paths to calculate the derivative $Dh_c$ on a neighborhood of $z$ in $V$.}\label{cont}
 \end{figure}

It is clear that for any $\epsilon>0$, there exists $0<\delta<\epsilon$ such that $$ B_\delta(z)\subset\bigcup_{s\in S(z)}\mathcal{F}_{f,{\rm loc}}^{u}\big(B_\epsilon(s)\big),$$
 where $B_\epsilon(s)$ is an $\e$-open ball in $V$.
 Then for any $\bar{z},  \tilde{z} \in B_{\delta}(z)$, there exist two points $\bar{y} \in \mathcal{F}_{f,{\rm loc}}^u(\bar z)\cap V_2$ and $\tilde{y} \in \mathcal{F}_{f,{\rm loc}}^u(\tilde{z})\cap V_2$ such that there are  points $ y(\bar z)\in S(z)$ and $y(\tilde{z})\in S(z)$ such that $\bar{y}\in B_\epsilon(y(\bar z))$ and $\tilde{y}\in B_\epsilon(y(\tilde z))$ (see Figure \ref{cont}).  Note that
 \begin{align*}
 	Dh_{c,z}(z)=D{\rm Hol}^u_{g,h(y(\bar z)),h(z)}\big(h(y(\bar z))\big)\circ Dh_{c,y(\bar z)}\big(y(\bar z)\big)\circ D{\rm Hol}^u_{f,z,y(\bar z)}(z),
 \end{align*}
 and
 \begin{align*}
 	Dh_{c,\bar z}(\bar z)=D{\rm Hol}^u_{g,h(\bar y),h(\bar z)}\big(h(\bar y)\big)\circ Dh_{c,\bar y}(\bar y)\circ D{\rm Hol}^u_{f,\bar z,\bar y}(\bar z).
 \end{align*}
Since  holonomy maps are uniformly $C^1$-smooth  and $Dh_{c}$ is continuous on $V_2$, we have that $Dh_{c,z}(z)$ is close to $Dh_{c,\bar{z}}(\bar{z})$.
Similarly, we can use the holonomy maps given by  the local unstable leaves of $\tilde{y}$ and $y(\tilde{z})$ to control the variation between $Dh_{c,z}(z)$ and $Dh_{c,\tilde{z}}(\tilde{z})$.
Then we have that $Dh_{c,z}(z)$ varies continuously at $z$ restricted on $V$.
\end{proof}

\begin{claim}\label{claim Dhc continuous on M}
For any $w\in M$, the derivative $Dh_{c,w}$ is continuous at $w$ on $M$.
\end{claim}

\begin{proof}[Proof of Claim \ref{claim Dhc continuous on M}]
	For any point $w\in M$, there is an $su$-path from $w$ to some point $z\in V$, since $f$ is accessible. Thus there exists a neighborhood $B(w)$ of $w$ in $M$ such that for any point $w'\in B(w)$ there is an $su$-path from $w'$ to $z'\in V$ close to the $su$-path from $w$ to $z$ in the sense of $C^1$-topology (see Figure 4). Then we have that
\begin{align*}
	Dh_{c,w}(w)=D{\rm Hol}_{g,h(z),h(w)}(h(z))\circ Dh_{c,z}(z)\circ D{\rm Hol}_{f,w,z}(w),
\end{align*}
and
\begin{align*}
	Dh_{c,w'}(w')=D{\rm Hol}_{g,h(z'),h(w')}(h(z'))\circ Dh_{c,z'}(z')\circ D{\rm Hol}_{f,w',z'}(w'),
\end{align*}
where ${\rm Hol}_{f,w,z}$ is the holonomy map (given by some compositions of Hol$^s$ and Hol$^u$) of a fixed $su$-path from $w$ to $z$ and $D{\rm Hol}_{f,w',z'}$ is the honolomy map of the $su$-path from $w'$ to $z'$ which is close (in $C^1$-topology) to the fixed $su$-path (from $w$ to $z$).  And ${\rm Hol}_{g,h(z),h(w)}$ is given by the  image of the  fixed $su$-path (from $w$ to $z$) via the conjugacy $h$.
This implies that  $Dh_{c,w}(w)$ is continuous for any $w\in M$ since we already have that $Dh_c$ is continuous on $V$. 
\end{proof}

  \begin{figure}[htbp]
	\centering
	\includegraphics[width=7cm]{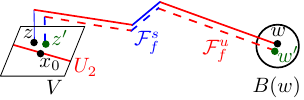}
	\caption{The $su$-holonomies vary continuously in $C^1$-topology.}	
\end{figure}

Hence $h$ is $C^1$-smooth along the center foliation.
\end{proof}

\begin{remark}\label{h_c-$C^1$}
To prove the  $C^1$-regularity of  $h_c$, there is another approach which needs more subtle functional analysis on the derivatives. From Theorem 7.3 in \cite{O}, we can know that if a function is differentiable everywhere, then the derivative is continuous on a residual set. When  Claim \ref{3 claim holonomy get Dhx=0} is gotten, there exists a point $z\in M$ such that $Dh_{c,z}(z)$ is $C^0$ restricted on $\mathcal{F}^c_f(z)$.  Just like the proof of Claim \ref{claim Dhc continuous on M},  we can get that $Dh_{c,x}(x)$ is $C^0$ at any point $x\in M$ with respect to the topology of $M$ by the equation (\ref{eq. Dh_c}), since the  holonomies  are $C^1$ and the systems are accessible.
\end{remark}

   \section{Anosov diffeomorphisms on $3$-torus}\label{sec: 3 torus}

\qquad In this section, we prove Corollary \ref{1 cor DA} and hence we can get the proof of Theorem \ref{1 thm Anosov}. In fact, we  will reduce Corollary \ref{1 cor DA} to the accessible case Theorem \ref{1 thm ph}.  At the end of this section, we state a question for higher-dimensional torus. Note that  in this section we always consider the topological conjugacy being homotopic to identity.

 Let  $f:\TT^3\to \TT^3$ be a partially hyperbolic diffeomorphism homotopic to an Anosov automorphism $A:\TT^3\to\TT^3$. It has been proved in \cite{Po} that  $f$ is dynamically coherent. Moreover,  $A$ also admits a partially hyperbolic splitting  $$T\TT^3=E_A^s\oplus E_A^c\oplus E_A^u.$$ We call such $A:\TT^3\to\TT^3$  is  \textit{center contracting}, if the modulus of the eigenvalue of $A\in {\rm GL}_3(\ZZ)$ corresponding to $E^c_A$ is smaller than $1$.  Further, one has  the following  proposition.  Denote  by $\widetilde{\mathcal{F}}_{*}^{\sigma}\ (*=f,A,\ {\rm and}\  \sigma=s,c,u,cs,cu)$ the  liftings of $\mathcal{F}_{*}^{\sigma}$ by the natural projection $\pi:\RR^3\to \TT^3$.

\begin{proposition}[\cite{F2, Po}]\label{2 prop semi-conjugate}
	Let $f:\TT^3\to \TT^3$ be a partially hyperbolic diffeomorphism homotopic to a center contracting Anosov automorphism $A:\TT^3\to\TT^3$. Let  $F$ be a lifting of $f$ by $\pi$. Then there exists a uniformly continuous surjection $H_f:\RR^3\to\RR^3$ (called semi-conjugacy) such that
	\begin{enumerate}
		\item $H_f\circ F=A\circ H_f$ and there is $C>0$ satisfying $d_{C^0}(H_f, {\rm Id}_{\RR^3})<C$. And such $H$ is unique.
		\item  $H_f(x+n)=H_f(x)+n$ for all $x\in\RR^3$ and $n\in\ZZ^3$ and hence $H$ can descend to $\TT^3$ denoted by $h_f:\TT^3\to \TT^3$ such that  $h_f$ is homotopic to Id$_{\TT^3}$ and $h_f\circ f=A\circ h_f$.
		\item $H_f:\widetilde{\mathcal{F}}_{f}^{u}(x)\to \widetilde{\mathcal{F}}_{A}^{u}(H_f(x))$ is a homeomorphism, for all $x\in\RR^3$.
		\item $H_f\big( \widetilde{\mathcal{F}}_{f}^{\sigma}  (x) \big) =\widetilde{\mathcal{F}}_{A}^{\sigma}(H_f(x))$, for all $x\in\RR^3$ and $\sigma=c,cs,cu$.
	\end{enumerate}
\end{proposition}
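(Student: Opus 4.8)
I would first build $H_f$ by a Franks-type argument that uses only the hyperbolicity of $A$, obtaining parts (1) and (2), and then read off the geometric statements (3) and (4) by feeding the semiconjugacy relation into the coarse geometry of the lifted invariant foliations of $f$. For the construction, since $f$ is homotopic to $A$ choose a lift $F$ of $f$ with $\phi:=F-A$ bounded and $\ZZ^3$-periodic, and look for $H_f=\mathrm{Id}_{\RR^3}+u$ with $u$ bounded and $\ZZ^3$-periodic; then $H_f\circ F=A\circ H_f$ is equivalent to the twisted cohomological equation
\[
A\,u-u\circ F=\phi .
\]
Splitting $\RR^3=E^u_A\oplus(E^s_A\oplus E^c_A)$ into the expanding and contracting parts of $A$ and writing $u=u^{+}+u^{-}$, $\phi=\phi^{+}+\phi^{-}$ accordingly, each component is solved by the usual telescoping series (read inside the relevant invariant subspace),
\[
u^{+}=\sum_{k\ge 0}A^{-(k+1)}\,\phi^{+}\!\circ F^{k},\qquad
u^{-}=-\sum_{k\ge 1}A^{\,k-1}\,\phi^{-}\!\circ F^{-k},
\]
which converge uniformly because $A^{-1}$ contracts $E^u_A$, $A$ contracts $E^s_A\oplus E^c_A$, and $\phi^{\pm}$ are bounded; this also gives the constant with $d_{C^0}(H_f,\mathrm{Id})\le C$. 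Periodicity of $u$ comes from that of $\phi$ together with $F(\,\cdot+n\,)=F(\,\cdot\,)+n$, yielding (2) and the descent to $h_f$ homotopic to $\mathrm{Id}_{\TT^3}$, and uniqueness is the same computation applied to the difference of two bounded solutions, which satisfies $A v=v\circ F$ and is forced to vanish by iteration.

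For (3), take $y,z$ on one leaf of $\widetilde{\mathscr{F}}^{u}_f$. Since $Df^{-1}$ contracts $E^u_f$ uniformly, $d(F^{-n}y,F^{-n}z)\to0$, so from $H_f\circ F^{-n}=A^{-n}\circ H_f$ and $d_{C^0}(H_f,\mathrm{Id})\le C$ the vector $A^{-n}\bigl(H_f(y)-H_f(z)\bigr)$ stays bounded as $n\to+\infty$; as $A^{-n}$ expands the $E^s_A$- and $E^c_A$-directions, this forces $H_f(y)-H_f(z)\in E^u_A$, i.e. $H_f$ maps $\widetilde{\mathscr{F}}^{u}_f(x)$ into the affine line $\widetilde{\mathscr{F}}^{u}_A(H_f(x))$, where it is monotone as a continuous map of a one-dimensional leaf. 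Injectivity follows from the quasi-isometric embedding of $\widetilde{\mathscr{F}}^{u}_f$-leaves in $\RR^3$: forward iteration sends the intrinsic distance of $F^{n}y,F^{n}z$, hence also the ambient distance, to infinity, which is incompatible with $H_f(F^{n}y)=H_f(F^{n}z)$ for a bounded perturbation of the identity. Surjectivity holds because the image is connected and unbounded in both directions inside that line, and a continuous monotone bijection of $\RR$ is a homeomorphism.

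For (4) the essential ingredient is the structure theory of partially hyperbolic diffeomorphisms on $\TT^3$: with the dynamical coherence of $f$ recalled above, in the universal cover the leaves of $\widetilde{\mathscr{F}}^{cs}_f$ and $\widetilde{\mathscr{F}}^{cu}_f$ are quasi-isometrically embedded and stay within finite Hausdorff distance of the affine planes of the corresponding linear foliations of $A$, and the pairs $\widetilde{\mathscr{F}}^{u}_f,\widetilde{\mathscr{F}}^{cs}_f$ (and likewise for $A$) enjoy a global product structure. Granting this, $H_f$ maps $\widetilde{\mathscr{F}}^{cs}_f(x)$ into a bounded neighbourhood of a single $cs$-plane of $A$, which must be $\widetilde{\mathscr{F}}^{cs}_A(H_f(x))$ because $\widetilde{\mathscr{F}}^{cs}_f(x)$ is connected, contains $x$, and distinct $cs$-planes of $A$ diverge transversally; conversely, the global surjectivity of $H_f$ combined with the product structures and part (3) shows that the whole plane $\widetilde{\mathscr{F}}^{cs}_A(H_f(x))$ is hit, giving $\sigma=cs$. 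The case $\sigma=cu$ is the same (alternatively, $\widetilde{\mathscr{F}}^{cu}_f(x)$ is the union of the $\widetilde{\mathscr{F}}^{u}_f$-leaves meeting $\widetilde{\mathscr{F}}^{c}_f(x)$, each of which goes to an $A$-unstable line by (3)), and $\sigma=c$ follows by intersection, $\widetilde{\mathscr{F}}^{c}_f=\widetilde{\mathscr{F}}^{cs}_f\cap\widetilde{\mathscr{F}}^{cu}_f$. The main obstacle is exactly the quasi-isometry and Hausdorff-closeness of the lifted foliations used in (3)--(4): since $f$ is only assumed homotopic to $A$, not $C^1$-close, one cannot compare contraction and expansion rates directly and must instead control the large-scale geometry of the foliations, which is what the cited references supply.
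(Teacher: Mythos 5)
The paper does not prove this proposition; it is imported verbatim from the cited references (Franks for items (1)--(2), Potrie/Hammerlindl-type structure theory for (3)--(4)), and your sketch follows exactly that standard route, so there is no divergence of method to report. The outline is essentially correct: the telescoping series for the twisted cohomological equation, the boundedness-under-iteration argument pinning $H_f(y)-H_f(z)$ into $E^u_A$, and the reduction of (4) to quasi-isometry, Hausdorff-closeness to linear planes, and global product structure are all the right ingredients, and it is fair to treat those large-scale facts as given since they are precisely what the cited works establish. Two small points: the lift satisfies $F(x+n)=F(x)+An$ rather than $F(x)+n$ (periodicity of $u$ still follows since $\phi$ is $\ZZ^3$-periodic and $A^k n\in\ZZ^3$); and in (4), ``image lies in a bounded neighbourhood of the plane'' does not by itself force containment in the plane --- you need to apply $A^n\circ H_f=H_f\circ F^n$ together with the \emph{uniform} Hausdorff bound over all iterated leaves to kill the $E^u_A$-component, as you did for $\sigma=u$.
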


\begin{remark}\label{2 rmk Anosov conjugacy preserves center}
	If $f:\TT^3\to\TT^3$ is Anosov, then the semi-conjugacy $h_f$ given by Proposition \ref{2 prop semi-conjugate} is in fact a conjugacy. It follows from the fourth item that $h_f$ preserves the center foliation and the restriction of $h_f$ on each center leaf is a homeomorphism.
\end{remark}

  We call $f$ is $su$-\textit{integrable}, if there exists a $({\rm dim} E_f^s+{\rm dim} E_f^u)$-dimensional foliation denoted by $\mathcal{F}^{su}_f$ tangent to $E^s_f\oplus E^u_f$.
It is clear from the definitions that if $f$ is accessible, then it is not $su$-integrable. Conversely,
\cite{HU} proved that for a conservative partially hyperbolic diffeomorphism $f:\TT^3\to\TT^3$   derived from Anosov,   if $f$  is not accessible, then it is $su$-integrable and conjugate to its linearization. We mention that the conservative condition can be replaced by assuming the non-wandering set $\Omega(f)=\TT^3$.
Moreover without the conservative condition, combining the main results in \cite{GanS} and  \cite{HS}, one has the following dichotomy (Theorem \ref{2 thm dichotomy}, also see \cite[Corollary 1.4]{HS}).
We also refer to \cite{GRZ} for the case of Anosov diffeomorphism $f$ on $\TT^3$ in which one has that the conjugacy between $f$ and its linearization $f_*$ preserves the strong stable foliation, if and only if, $f$ is $su$-integrable, if and only if, $f$ is not accessible.

\begin{theorem}[\cite{GanS,HS}]\label{2 thm dichotomy}
	Let $f:\TT^3\to \TT^3$ be a $C^{1+\alpha}$-smooth partially hyperbolic diffeomorphism homotopic to an Anosov automorphism $A:\TT^3\to\TT^3$. Then
	\begin{itemize}
		\item either $f$ is accessible,
		\item or $f$ is Anosov and $\lambda^c(p,f)=\lambda^c(A)$, for all $p\in{\rm Per}(f)$.
	\end{itemize}
\end{theorem}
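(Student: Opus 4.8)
The plan is to prove the dichotomy in the contrapositive-flavoured form: \emph{if $f$ is not accessible, then $f$ is Anosov and $\lambda^c(p,f)=\lambda^c(A)$ for every $p\in\mathrm{Per}(f)$}. Replacing $f$ and $A$ by $f^{-1}$ and $A^{-1}$ if necessary — which affects neither accessibility nor the Anosov property, while it negates $\lambda^c(p,f)$ and $\lambda^c(A)$ at the same time — we may assume that $A$ is center contracting, so that $E^c_A$ is the weak stable bundle of $A$.

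\emph{Step 1: non-accessibility implies $su$-integrability.} The accessibility classes of a partially hyperbolic diffeomorphism with one-dimensional center bundle partition the manifold, and the union of the non-open ones is a closed set laminated by $C^{1}$ immersed surfaces tangent to $E^s_f\oplus E^u_f$; if $f$ is not accessible this set is nonempty. On $\TT^3$ one upgrades this partial lamination to a foliation of the whole torus, and here the derived-from-Anosov geometry is essential: the lifted stable and unstable foliations are quasi-isometrically embedded in $\RR^3$ and carry a global product structure modelled on the linear one, which provides the rigidity needed for the upgrade. This is done in the volume-preserving setting in \cite{HU} and in general in \cite{HS}; I expect it to be one of the two main technical points. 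The outcome is an $f$-invariant foliation $\mathscr{F}^{su}_f$ tangent to $E^s_f\oplus E^u_f$, transverse and complementary to $\mathscr{F}^c_f$, i.e. $f$ is $su$-integrable.

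\emph{Step 2: $su$-integrability implies $f$ is Anosov.} This, together with Step 3, is in substance the main theorem of \cite{GanS}, which I would invoke rather than reprove; the mechanism I expect is as follows. Using the global product structure between $\mathscr{F}^{su}_f$ and $\mathscr{F}^c_f$, one rules out a periodic point of nonnegative center exponent — along the orbit of such a point the leaves of $\mathscr{F}^{su}_f$ would fail to remain uniformly transverse to the center direction — so every periodic center exponent is negative; since $E^c_f$ is one-dimensional and dominated, this upgrades to uniform contraction of $E^c_f$ by standard means, and as $E^s_f$ is contracted and $E^u_f$ expanded, $f$ is Anosov. The semi-conjugacy of Proposition \ref{2 prop semi-conjugate} is then a genuine conjugacy $h$ to $A$ (Remark \ref{2 rmk Anosov conjugacy preserves center}), and, $f$ being Anosov and $su$-integrable, $h$ preserves the strong stable foliation by \cite{GRZ}.

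\emph{Step 3: matching the exponents.} It remains to show $\lambda^c(p,f)=\lambda^c(A)$ for every $p\in\mathrm{Per}(f)$. Working inside a single $su$-leaf — where any two of the center leaves meeting it are joined by finitely many strong stable and strong unstable legs, whose holonomies are uniformly $C^{1}$ by Proposition \ref{2 prop smooth holonomy} and commute with $f$ — one repeats the propagation argument of Claim \ref{3 claim holonomy get Dhx=0} to get that $h$ is $C^{1}$ with nonvanishing derivative along $\mathscr{F}^c_f$. Differentiating at a periodic point $p$ of period $k$ the conjugacy of the first-return maps on the center leaf yields $Df^{k}|_{E^c_f(p)}=A^{k}|_{E^c_A(h(p))}$, whence $\lambda^c(p,f)=\lambda^c(A)$; equivalently, by a Liv\v{s}ic-type argument, $\log\|Df|_{E^c_f}\|$ is cohomologous to the constant $\lambda^c(A)$. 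The genuinely hard part is this passage from the purely topological hypothesis of $su$-integrability (the output of Step 1) to rigidity of the center derivative cocycle, which is the content of \cite{GanS}; together with Step 1 (citing \cite{HS}) it completes the proof of the dichotomy.
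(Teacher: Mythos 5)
The paper does not prove this theorem at all: it is imported by combining the main results of \cite{HS} (a non-accessible DA partially hyperbolic diffeomorphism on $\TT^3$ is $su$-integrable) with those of \cite{GanS} ($su$-integrability forces $f$ to be Anosov with every periodic center exponent equal to $\lambda^c(A)$), see also \cite[Corollary 1.4]{HS}, and your three-step decomposition reproduces exactly this combination while explicitly deferring the two hard implications to the same references, so it is correct and takes essentially the paper's approach. One caveat: your heuristic for Step 3 --- propagating differentiability of $h$ along a single $su$-leaf as in Claim \ref{3 claim holonomy get Dhx=0} --- would not by itself suffice, since in the non-accessible case there is no uniform bound on the number and length of legs within an accessibility class and a single $su$-leaf is only dense in $\TT^3$, but as you invoke \cite{GanS} for that step rather than relying on this sketch, the argument as structured is sound.
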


\begin{remark}
We mention here that we can get our Theorem \ref{1 thm Anosov} by combining Theorem \ref{1 thm ph} with the main results of  \cite{GanS} and  \cite{HU}. Indeed, let $f,g:\TT^3\to \TT^3$ satisfy the assumption of  Theorem \ref{1 thm Anosov}. Then by \cite{HU}, one has that $f$ and $g$ are simultaneously either  $su$-integrable, or  accessible. In the first case, the center periodic data of $f$ and $g$ are the same as one of their linearization respectively \cite{GanS}.  The other case is an immediate corollary of Claim \ref{3 claim holonomy get Dhx=0}.
\end{remark}

By Theorem \ref{2 thm dichotomy}, we will reduce Corollary  \ref{1 cor DA} to  Theorem \ref{1 thm ph}.

\begin{proof}[Proof of Corollary \ref{1 cor DA}]
	Let $f$ and $g$ satisfy  the conditions of Corollary \ref{1 cor DA}. Since $f$ is conjugate to $g$ via $h$  which is a homeomorphism homotopic to Id$_{\TT^3}$ such that $h\circ f= g\circ h$ and $f$ is derived from Anosov automorphism $A\in {\rm GL}_3(\ZZ)$. We have that $g_*=(h\circ f\circ h^{-1})_*=f_*=A$.
	
	We claim that $f$ is accessible if and only if $g$ is accessible. Indeed, let $F,G$ be any two liftings  of $f$ and $g$ by the natural projection $\pi$. Let $H_f$ and $H_g$ be the semi-conjugacy between $F$  with $A$  and $G$  with $A$ respectively given by Proposition \ref{2 prop semi-conjugate}.
	Let $h$ be a conjugacy between $f$ and $g$ and let $H$  be its lifting  with $H\circ F=G\circ H$. Note that $h$ is homotopic to Id$_{\TT^3}$, thus $H$ is uniformly bounded with Id$_{\RR^3}$. Since $H_f\circ H^{-1}$ is also a semi-conjugacy between $G$ and $A$ satisfying the first item of Proposition \ref{2 prop semi-conjugate}, we have $H_g=H_f\circ H^{-1}$ by the uniqueness. By Proposition \ref{2 prop semi-conjugate} again, one has that
	\begin{align}
		H_{*}\big( \widetilde{\mathcal{F}}_{*}^{\sigma} (x)  \big)= \widetilde{\mathcal{F}}_{A}^{\sigma} (H_{*}(x)),\   \ \forall x\in\RR^3,\ \  \sigma=c,u\  \ {\rm and}\  \ *=f,g. \label{eq. 2. H_f preserves center}
	\end{align}
	It follows that
	\begin{align}
		H\big( \widetilde{\mathcal{F}}_{f}^{\sigma} (x)  \big)=  \widetilde{\mathcal{F}}_{g}^{\sigma} \big(H(x)\big),\  \ \forall x\in\RR^3\  \ {\rm and}\ \  \sigma=c,u. \label{eq. 2. H preseves center}
	\end{align}
	In fact, if \eqref{eq. 2. H preseves center} is not true, then one can get a contradiction immediately with \eqref{eq. 2. H_f preserves center} and the assumption $H\big( \widetilde{\mathcal{F}}_{f}^{s} (x)  \big)=  \widetilde{\mathcal{F}}_{g}^{s} (H(x))$. In particular, we get that  $h\big(\mathcal{F}^{u/s}_f(x) \big)= \mathcal{F}^{u/s}_g(h(x))$ for all $x\in\TT^3$. Hence $f$ is accessible if and only if $g$ is accessible.
	
	If $f$ and $g$ are both accessible,  in particular, by \eqref{eq. 2. H preseves center}, we have that
	\begin{align*}
		h\big(\mathcal{F}^{\sigma}_f(x) \big)= \mathcal{F}^{\sigma}_g(h(x)), \ \ \forall x\in\TT^3\ \ {\rm and}\ \ \sigma=s,c,u,
	\end{align*}
   then we obtain Corollary \ref{1 cor DA} from Theorem \ref{1 thm ph}.

   If neither of them  is accessible, it follows from Theorem \ref{2 thm dichotomy} that   $\lambda^c(p,f)=\lambda^c(A)=\lambda^c(h(p),g)$, for all $p\in{\rm Per}(f)$ and hence $h$ is $C^1$ along the center foliation (see Remark \ref{1 rmk smooth}).
\end{proof}

\vspace{2mm}
 As mentioned before, we now state a question  related  to Theorem \ref{1 thm Anosov}  and  the local rigidity in \cite{G2008} of higher-dimensional torus.  For convenience, we first restate a main property in \cite{G2008}. Let $A:\TT^d\to\TT^d\ (d\geq 4)$ be an Anosov automorphism  with the \textit{finest dominated splitting on the weak stable bundle}, namely,  the stable bundle $L^s$ of $A$ admits
$$L^s=L^s_1\oplus L^{ws}=L^s_1\oplus L^s_2\oplus...\oplus L^s_k,$$
where $L^s_j \ (1\leq j\leq k)$ is $DA$-invariant, $L^s_i \ (2\leq i\leq k)$ is one-dimensional and there exists a constant $0<\lambda<1$ such that $\|DA|_{ L^s_i(x)}\|/ \|DA|_{L^s_{i+1}(x)}\|\leq \lambda$, for all $x\in\TT^d$ and all $1\leq i\leq k-1$.
Note that we can choose an appropriate $C^1$ neighborhood $\mathcal{U}$ of $A$ such that any $f\in\mathcal{U}$ is also Anosov diffeomorphism with finest dominated splitting on the weak stable bundle. In \cite{G2008}, Gogolev proved that there exists a $C^1$ neighborhood $\mathcal{U}$ of $A$ such that for any two $C^2$-smooth Anosov diffeomorphisms $f,g\in\mathcal{U}$ with finest dominated splittings on weak stable bundles $E^s_{\sigma}=E^s_1(\sigma)\oplus...\oplus E^s_k(\sigma)\ ,(\sigma=f,g)$,  if $f$ and $g$ have the same \textit{weak stable periodic data}, i.e. the periodic Lyapunov exponents on each weak stable bundles $E^s_i(f)\ (2\leq i\leq k)$ coincide with ones of $E^s_i(g)$, then the conjugacy between $f$  and $g$ preserves the strongest stable foliations corresponding to $E^s_1(f)$ and $E^s_1(g)$.  Hence we can also consider the converse of this property. More precisely, we state this question as  follow.
\begin{question}\label{question}
	Let $A:\TT^d\to\TT^d\ (d\geq4)$  be an Anosov automorphism with the finest dominated splittings on the weak stable bundle.  Is there a $C^1$ neighborhood $\mathcal{U}$ of $A$ such that for any two $C^{1+\alpha}$-smooth  Anosov diffeomorphisms $f,g\in\mathcal{U}$ with finest dominated splittings on weak stable bundles, if  the conjugacy between $f$ and $g$ preserves the strongest stable foliations, then $f$ and $g$ have the same weak stable periodic data?
\end{question}
\begin{remark}
 We  refer again to \cite{gogolevshi} for a positive answer to Question \ref{question} under the assumption of $g=A$.
\end{remark}

\bibliographystyle{plain}

\bibliography{ref}

\flushleft{\bf Daohua Yu} \\
School of Mathematical Sciences, Peking University, Beijing, 100871,  China\\
\textit{E-mail:} \texttt{yudh@pku.edu.cn}\\

\flushleft{\bf Ruihao Gu} \\
School of Mathematical Sciences, Peking University, Beijing, 100871,  China\\
\textit{E-mail:} \texttt{rhgu@pku.edu.cn}\\

\end{document}